\documentclass[reqno]{amsart}

\usepackage[margin=1in]{geometry}

\usepackage{mathtools}
\usepackage{amssymb, amsfonts}
\usepackage{mathrsfs}
\usepackage{microtype}
\usepackage{enumitem}

\numberwithin{equation}{section}

\theoremstyle{plain}
\newtheorem{theorem}{Theorem}[section]
\newtheorem{lemma}[theorem]{Lemma}

\theoremstyle{definition}

\theoremstyle{remark}

\usepackage[hidelinks]{hyperref}

\title[A Sharper Explicit Result for the Sum of Two Almost Primes]
{A Sharper Explicit Result for the Sum of Two Almost Primes}

\author{Peter J. Campbell}

\address{School of Mathematics and Physics, The University of Queensland, St Lucia, Brisbane, QLD 4072, Australia}
\email{p.campbell1@uq.edu.au}

\subjclass[2020]{Primary 11P32; Secondary 11N36, 11N05}

\keywords{Almost primes, Goldbach-type theorems, sieve methods, explicit bounds}

\begin{document}

\begin{abstract}
    We prove that for every integer \(N\geq2\), there exist positive integers \(a\) and \(b\) such that \(N=a+b\) and \(\Omega(ab)\leq33\), where \(\Omega(n)\) denotes the number of prime factors of \(n\), counted with multiplicity. This improves the previous bound of \(40\) obtained by Dudek and Dunn. The proof applies the explicit Friedlander--Iwaniec \(\Lambda^-\Lambda^2\) lower-bound sieve to a sequence derived from the products \(n(N-n)\). The main new ingredient is pre-sieving at the prime \(3\), which eliminates the extremal small-prime case in the dimension condition while keeping the resulting remainder terms under explicit control. We complete the proof using analytic estimates for large \(N\), finite verification over an intermediate range, and explicit prime-gap data for small \(N\).
\end{abstract}

\maketitle

\section{Introduction}

Goldbach's conjecture is one of the oldest and most famous open problems in additive number theory. In its strong form, it asserts that every even integer greater than two can be written as the sum of two primes. Although the conjecture remains unresolved, it has motivated extensive work on representing integers as sums of numbers with few prime factors. Such numbers are commonly
called \textit{almost primes}. Throughout this paper, \(\Omega(n)\) denotes the
number of prime factors of \(n\), counted with multiplicity.

A landmark result in this direction is Chen's theorem, which states that every sufficiently large even integer can be written as the sum of a prime and an integer with at most two prime factors~\cite{chen1973large_even_integer}. Whereas Chen's theorem applies only to sufficiently large even integers, a classical theorem of R\'enyi~\cite{renyi1948_even_number} is uniform in \(N\): there is a fixed natural number \(C\) such that every even integer \(N\geq4\) is the sum of a prime and an integer with at most \(C\) prime factors. More recently, Johnston and Starichkova~\cite{johnston_starichkova2025_sum_prime_almost_prime} proved the following explicit form of Rényi's theorem.

\begin{theorem}\label{thm:JS}
    Every even integer \(N\geq4\) can be written as
    \[
    N=p+r,
    \]
    where \(p\) is prime and \(r\) is a positive integer satisfying
    \[
    \Omega(r)\leq395.
    \]
\end{theorem}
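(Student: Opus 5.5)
This statement is the Johnston--Starichkova theorem, cited from~\cite{johnston_starichkova2025_sum_prime_almost_prime}. If I had to prove it myself, I would follow the classical route of R\'enyi and make every constant explicit. Fix an even \(N\geq4\) and consider the sequence \(\mathcal{A}=\{N-p : p<N,\ p\ \text{prime}\}\). The aim is a positive lower bound for the number of elements of \(\mathcal{A}\) that have no prime factor below \(z=N^{1/u}\). Any such element \(r=N-p<N\) satisfies \(\Omega(r)<u\). So it suffices to show that the sifting function \(S(\mathcal{A},z)\) is positive for some explicit \(u\), and then to read off \(\Omega(r)\leq\lfloor u\rfloor\).

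For the main term, \(|\mathcal{A}_d|\) counts primes \(p\equiv N \pmod d\). For squarefree \(d\) coprime to \(N\) this is approximately \(\pi(N)/\varphi(d)\); when \(d\) shares a factor with \(N\) the count is \(O(1)\). The sieve is therefore one-dimensional with density \(g(p)=1/(p-1)\) for \(p\nmid N\). The factor \(2\) is handled because \(N\) is even, which forces \(N-p\) to be odd once \(p\neq2\).

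I would use an explicit linear lower-bound sieve, either Rosser--Iwaniec or the explicit \(\Lambda^-\) sieve, with level of distribution \(D=N^{\theta}\). This gives
\[
S(\mathcal{A},z)\geq \pi(N)\,V(z)\bigl(f(s)-\text{error}\bigr)-\sum_{d<D,\,d\mid P(z)}|r_d|,\qquad s=\log D/\log z .
\]
The remainder sum is controlled by an explicit Bombieri--Vinogradov theorem, or more simply by explicit Brun--Titchmarsh together with an explicit zero-free region and Siegel-zero-free information. Since only \(\theta\) small is needed, one can even avoid Bombieri--Vinogradov: take \(D=N^{\theta}\) with a small explicit \(\theta\) and use explicit Page/Siegel-type results for moduli up to \(D\), accepting a possible exceptional modulus by dropping it and adjusting. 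Positivity then requires \(s>2\), so \(u=s/\theta>2/\theta\), and the size of \(\theta\) essentially determines the constant \(395\).

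The main obstacle is the explicit equidistribution of primes in progressions uniformly for moduli up to \(N^{\theta}\), in particular the possible exceptional zero. I would first try an explicit Bombieri--Vinogradov-type bound with small \(\theta\), roughly \(1/200\), valid for \(N\geq N_0\) with \(N_0\) astronomically large. The remaining range \(4\leq N<N_0\) I would cover as follows: for moderate \(N\), by the verified Goldbach conjecture up to \(4\cdot10^{18}\), which gives \(\Omega(r)=1\); beyond that, by explicit prime gap results. A prime \(p\) close to \(N\) makes \(r=N-p\) small, and a small \(r<N\) trivially has \(\Omega(r)\leq\log_2 N\). So one needs primes in \([N-2^{395},N)\), and explicit short-interval prime theorems give these for all \(N\) up to a huge bound. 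The final bookkeeping step is to check that this bound exceeds \(N_0\).
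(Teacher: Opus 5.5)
The paper gives no proof of this statement. It is quoted from Johnston and Starichkova, with only the remark that their argument uses the explicit linear sieve of Bordignon, Johnston and Starichkova. Your architecture is consistent with that: a linear sieve on \(\{N-p\}\) with \(g(p)=1/(p-1)\) for \(p\nmid N\), positivity once \(s>2\), hence \(\Omega(r)<u\approx2/\theta\). However, two steps fail as written.

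First, your ``simpler'' routes to the remainder sum cannot reach a level \(N^\theta\) for any fixed \(\theta>0\). Brun--Titchmarsh only gives \(|r_d|\ll\pi(N)/\varphi(d)\). Summing this over the moduli \(d<D\) beyond the Siegel--Walfisz range gives a total far exceeding the main term \(\pi(N)V(z)\asymp\pi(N)/\log z\). A zero-free region \(\sigma>1-c/\log(qT)\) does no better, even after Page's theorem has dealt with the exceptional modulus. (Siegel's theorem is ineffective, so there is no explicit ``Siegel-type'' input.) For \(q=N^\theta\) it saves only the constant factor \(N^{-c/\log q}=e^{-c/\theta}\). The resulting pointwise bound for \(|\psi(N;q,a)-N/\varphi(q)|\) is therefore roughly \(Ne^{-c/\theta}\), which swamps \(N/\varphi(q)\) as soon as \(\varphi(q)>e^{c/\theta}\). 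Pointwise arguments only handle \(q\le\exp(c\sqrt{\log N})\). Some estimate that averages over moduli is indispensable: the large sieve, as in R\'enyi's original proof, or an explicit log-free zero-density estimate.

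Second, and more seriously, your ranges do not meet. Goldbach data are unnecessary at the bottom, since \(p=2\) gives \(\Omega(N-2)\le\log_2(N-2)<396\) whenever \(N<2^{396}+2\). Above that you need a prime \(p\) with \(N-p<2^{396}\), i.e.\ a prime in an interval of \emph{fixed} length about \(10^{119}\). Explicit short-interval theorems only give primes in \((x,x+x/\Delta]\) with \(\Delta\) a constant or a power of \(\log x\) (e.g.\ Dusart's \(\Delta=5000\log^2x\)). This covers \(N\) only up to about \(2^{396}\Delta\), of order \(10^{128}\). Nothing explicit guarantees primes in all intervals of length \(10^{119}\) much beyond that.

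So if your analytic range starts at an ``astronomically large'' \(N_0\), everything between roughly \(10^{128}\) and \(N_0\) is uncovered. You anticipate such an \(N_0\) yourself, and explicit Bombieri--Vinogradov arguments that must cope with an exceptional zero typically produce one. The bookkeeping check you postpone to the end is therefore false. The missing ingredient is an analytic argument that is already effective from about \(N\approx2^{396}\). For instance, use explicit prime number theorems in progressions for all small moduli while \(D=N^\theta\) is still small. Then switch to an averaged equidistribution estimate whose explicit constants are usable at moderate \(N\), not only beyond a doubly exponential threshold.
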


Their proof uses the explicit linear sieve of Bordignon, Johnston, and Starichkova~\cite{bordignon_johnston_starichkova2025}. They also prove, assuming the generalised Riemann hypothesis, the sharper bound \(\Omega(r)\leq31\).

In this paper, we relax the requirement that one of the two summands be prime. Rather than seeking a representation
\[
N=p+r
\]
with \(p\) prime and \(r\) almost prime, we seek a representation
\[
N=a+b
\]
for which the product \(ab\) has few prime factors. It is natural to ask how much this weaker requirement reduces the number of prime factors needed.

The first explicit result for this problem was obtained by Dudek and Dunn~\cite{dudek_dunn2026_sum_two_almost_primes}, who proved that every integer \(N\geq2\) can be written as \(N=a+b\), where \(a\) and \(b\) are positive integers satisfying
\[
\Omega(ab)\leq40.
\]
Their argument combines the explicit Friedlander--Iwaniec \(\Lambda^-\Lambda^2\) lower-bound sieve~\cite{friedlander_iwaniec2010opera} with estimates tailored to the sequence under consideration, a finite computation, and an explicit treatment of small values of \(N\).

Our main result is the following improvement of the Dudek--Dunn bound.

\begin{theorem}\label{thm:main}
Every integer \(N\geq2\) can be written as
\[
N=a+b,
\]
where \(a\) and \(b\) are positive integers satisfying
\[
\Omega(ab)\leq33.
\]
\end{theorem}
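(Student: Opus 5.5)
The proof splits into three ranges of \(N\): a large range handled analytically by a sieve, an intermediate range checked by computer, and a small range handled by explicit prime-gap data.

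\textbf{Set-up.} Sieve the sequence \(\mathcal{A}=\{n(N-n): 1\le n\le N-1\}\), or rather a pre-sieved version of it. For a prime \(p\nmid N\), the congruence \(n(N-n)\equiv0\pmod p\) has two roots, so the sieve has dimension about \(2\). For \(p\mid N\) it has one root. The local density is therefore \(g(p)=\omega(p)/p\) with \(\omega(p)\in\{1,2\}\).

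The obstruction to a good explicit dimension condition is the small primes. In particular, \(p=3\) with \(\omega(3)=2\) gives \(g(3)=2/3\), which drives up the constants in the \(\Lambda^-\Lambda^2\) sieve. So I first pre-sieve at \(3\): restrict \(n\) to a residue class modulo \(3\) (or modulo \(6\), handling \(2\) as well) such that \(3\nmid n(N-n)\) whenever possible.
\begin{itemize}
\item If \(3\nmid N\), the class \(n\equiv -N\pmod 3\) makes both \(n\) and \(N-n\) nonzero modulo \(3\).
\item If \(3\mid N\), factors of \(3\) cannot be avoided entirely; instead choose \(n\equiv N/\!\!/\) a class with \(3\nmid n\), so that \(3\nmid N-n\) too.
\end{itemize}
Sieving then runs over primes \(5\le p<z\), where the worst-case density satisfies a clean explicit dimension inequality \(\prod_{w\le p<z}(1-g(p))^{-1}\le (\log z/\log w)^{2}(1+\epsilon(w))\).

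\textbf{Large \(N\).} Apply the explicit Friedlander--Iwaniec \(\Lambda^-\Lambda^2\) lower bound, as in Dudek--Dunn, with level \(D=N^{\theta}\) and sifting range \(z=N^{1/s}\). This gives
\[
S(\mathcal{A},z)\ \ge\ X\,V(z)\bigl(f(s)-\text{error}\bigr)-\sum_{d<D,\ d\mid P(z)}\mu^2(d)\,3^{\omega(d)}|r_d|.
\]
Here \(X\approx N/3\), and the remainders satisfy \(|r_d|\le \omega(d)\le 2^{\omega(d)}\), since the pre-sieved progression makes each \(r_d\) a bounded count error. Bound the remainder sum explicitly by \(D(\log D)^{c}\) using explicit divisor-sum estimates. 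Bound \(V(z)\) from below using explicit Mertens-type bounds together with \(\prod(1-2/p)\) versus \(\prod(1-1/p)^2\). Then choose \(s\) and \(\theta\) so that the main term dominates for \(N\ge N_0\), with \(N_0\) something like \(10^{20}\)–\(10^{40}\).

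Any \(n\) counted has all prime factors of \(n(N-n)\) at least \(z\), except possibly a bounded power of \(3\) when \(3\mid N\). Since \(n(N-n)<N^2\), this gives \(\Omega\le 2s\), plus a small correction for \(3\). I would tune the parameters so that \(2s+(\text{correction})\le 33\). One refinement: strip the common factor \(\gcd(n,N)\), or simply note that primes dividing \(N\) enter only once and are large anyway.

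\textbf{Intermediate and small \(N\).} For \(N_1\le N<N_0\), with \(N_0\) presumably out of computational reach, I would instead use prime gaps. If \(N=p+m\) with \(p\) prime, then \(\Omega(ab)\le 1+\log_2 m\). Explicit gap results guarantee a prime in \([N-N/\Delta,N)\), so \(m\le N/\Delta\). This is too weak at large \(N\), which is why an intermediate finite verification is needed. There I would redo the sieve numerically with exact products \(V(z)\), which gives much better constants, and push the sieve threshold down to where Goldbach verification (up to \(4\cdot10^{18}\)) together with prime-gap tables covers the rest. For \(N\le 4\cdot10^{18}\) and \(N\) even, Goldbach gives \(\Omega(ab)=2\). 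For \(N\) odd, write \(N=1+(N-1)\) or \(N=2+(N-2)\) and apply Goldbach to the even part, so \(\Omega\le3\). Thus small \(N\) are trivial, and the real work is bridging from \(4\cdot10^{18}\) up to \(N_0\).

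\textbf{Main obstacle.} The delicate step is making the explicit sieve effective down to about \(4\cdot10^{18}\) while keeping \(\Omega\le33\). The remainder term \(\sum 3^{\omega(d)}|r_d|\) with \(D\) a fixed power of \(N\) is large, so \(\theta\) must be taken small. A small \(\theta\) forces \(s\) large, roughly \(s\approx 16.5\) with the \(\Lambda^-\Lambda^2\) sieve's dimension-\(2\) function. I expect the choice of \((\theta,s)\) to be tight: I would first try \(\theta\) near \(1/2\) with a numerically computed lower-bound function, and fall back on covering the gap by a segmented computation if needed.
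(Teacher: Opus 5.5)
Your architecture matches the paper's: pre-sieve at $3$ to remove the extremal case $w=3$ from the dimension condition, apply the Friedlander--Iwaniec $\Lambda^-\Lambda^2$ sieve with $z=N^{1/16.5}$, and split into analytic, computational and prime-gap ranges. Several steps, however, fail as written.

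\textbf{Prime-factor bookkeeping.} For odd $N$ one of $n$ and $N-n$ is always even, so no class modulo $6$ removes the prime $2$. Sifting only over $5\le p<z$ then leaves unbounded powers of $2$ in $n(N-n)$. You must keep $2$ in the sieve: the paper uses $a_n=n(N-n)/2$ with $g(2)=1/2$, which loses exactly one prime factor. Even that one factor fits only because $a_n\le N^2/4<N^2$ gives the \emph{strict} bound $\Omega(a_n)<2r=33$, i.e.\ $\Omega(a_n)\le32$. Your condition ``$2s+(\text{correction})\le33$'' does not close at $s=16.5$ once any correction is positive. The $3$-correction you worry about does not exist: if $3\mid N$ and $3\nmid n$, then $3\nmid N-n$.

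\textbf{Small odd $N$.} Goldbach for $N-1$ gives $N=p+q+1$, which has three summands. The split $N=1+(N-1)$ gives $\Omega(ab)=\Omega(N-1)$, which is unbounded. Use prime gaps instead: with $p$ the largest prime below $N\le G$, one has $N-p\le1854<2^{11}$, so $\Omega(p(N-p))\le11$.

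\textbf{Parameters and where the work lies.} This is the more serious problem. With $k=2+\log K$ and $K\approx1.1$, positivity of $F(s,k)$ needs $s=\log D/\log z$ of roughly $14.6$ or more; the paper uses $14.66$ and $14.8$. With $z=N^{1/16.5}$ this forces $\theta\approx0.89$. At $\theta=1/2$ you get $s=8.25$ and $F<0$.

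At that level, the remainder in the $\Lambda^-\Lambda^2$ sieve as used carries the weight $\tau_4(d)$, not $3^{\omega(d)}$. Rankin's trick then gives $\tau_8$-sums and a bound of the shape $D\log^8 z$. This only loses to $XV(z)F\asymp N/\log^2N$ from about $N\approx10^{132}$, not from $10^{20}$--$10^{40}$.

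The decisive ingredient missing from your plan is therefore a large finite verification over $z_0<z\le10^8$. On intervals $(q_i,q_{i+1}]$ the paper uses exact values of $\prod(1-2/p)$ and of the Rankin product $\prod(1+8/p^\delta)$, with $K=1.146$ established for $z\ge z_0$ in Lemma~\ref{lem:K-presieved}.

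Conversely, the low end that you single out as the main obstacle is easy. For $z\le z_0\approx38$, Legendre's identity already gives $S(\mathcal A,z)\ge XV(z)-\tfrac{19}{6}\,3^{\pi(z)}$, which is plainly positive there. The paper sidesteps even this with the choice $a=mp$, where $m=1158293$ is prime and $p$ is the largest prime below $N/m$. Then $b=N-mp\le1854m<2^{31}$, which stretches the gap table to $N_0=Gm$ (Lemma~\ref{lem:smallN}).
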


The main new ingredient is pre-sieving at the prime \(3\). In the Dudek--Dunn application of the Friedlander--Iwaniec lower-bound sieve, the verification of the dimension condition is constrained by an extremal small-prime case arising from the contribution at \(p=3\). By retaining only those indices \(n\) for which
\[
3\nmid n(N-n),
\]
and correspondingly omitting the prime \(3\) from the sifting product, we eliminate this extremal case. This permits a substantially smaller constant in the dimension condition, while the corresponding change in the remainder terms is controlled through an inclusion--exclusion identity for the pre-sieved sequence. We also track several numerical estimates more carefully than is necessary for the Dudek--Dunn bound of \(40\), allowing the final bound to be reduced to \(33\).

The remainder of the paper is organised as follows. In Section~\ref{sec:sieve-setup}, we introduce the required sieve notation and recall the explicit Friedlander--Iwaniec lower-bound sieve in the form used here. In Section~\ref{sec:presieve}, we construct the pre-sieved sequence, identify the main term in the associated counting formula, and derive the corresponding remainder identity. In Section~\ref{sec:estimates}, we establish the required dimension conditions and bounds for the main-term factor and the accumulated error term. Finally, in Section~\ref{sec:proof-main}, we combine these estimates to prove Theorem~\ref{thm:main}: explicit prime-gap data handle the small range, finite verification handles the intermediate range, and analytic estimates handle the large range.

\section{Sieve setup}\label{sec:sieve-setup}

In this section, we record the general form of the Friedlander--Iwaniec lower-bound sieve used in the proof. We formulate the result for a finite indexed sequence and an arbitrary set of sifting primes, so that it may later be applied directly to the sequence obtained by pre-sieving at the prime \(3\).

Let
\[
\mathcal A=(a_n)_{n\in I}
\]
be a finite indexed sequence of positive integers, and let \(\mathcal P\) be a set of sifting primes. For square-free \(d\) composed of primes in \(\mathcal P\), let
\[
\mathcal A_d:=(a_n)_{\substack{n\in I\\ d\mid a_n}}.
\]
Suppose that, for every such \(d\),
\[
|\mathcal A_d|=Xg(d)+r_d,
\]
where \(X>0\), \(g\) is multiplicative on the square-free integers composed of primes in \(\mathcal P\), and
\[
0<g(p)<1
\qquad (p\in\mathcal P).
\]

For \(z\geq2\), define the sifting product
\[
P(z):=\prod_{\substack{p<z\\ p\in\mathcal P}}p
\]
and the corresponding sifted sum
\[
S(\mathcal A,z)
:=
\#\{n\in I:(a_n,P(z))=1\}.
\]
The associated main-term factor is
\[
V(z):=\prod_{p\mid P(z)}(1-g(p)).
\]

Let \(\kappa\geq1\) and \(K\geq1\). We say that \(g\) satisfies the dimension condition with parameters \(\kappa\) and \(K\) if, for each value of \(z\) under consideration and every \(2\leq w<z\),
\begin{equation}\label{sieveDimension}
    \prod_{\substack{w\leq p<z\\ p\in\mathcal P}}
    (1-g(p))^{-1}
    \leq
    K\left(\frac{\log z}{\log w}\right)^\kappa.
\end{equation}
The parameter \(\kappa\) is called the dimension of the sieve.

We shall use the following form of the Friedlander--Iwaniec lower-bound sieve, obtained from Theorem~7.7 of~\cite{friedlander_iwaniec2010opera} within the general framework of Theorem~7.1 of the same book.

\begin{theorem}[Friedlander--Iwaniec lower-bound sieve]\label{thm:FI}
    With the notation and assumptions above, let \(D\geq z\), and put
    \[
    s:=\frac{\log D}{\log z},
    \qquad
    k:=\kappa+\log K.
    \]
    If \(s \geq 2k+3\), then
    \[
    S(\mathcal A,z)
    \geq
    XV(z)F(s,k)-2R_4(\mathcal A,D),
    \]
    where the sieve factor \(F(s,k)\) is defined by
    \begin{equation}\label{eq:sieve-factor}
    F(s,k)
    :=
    1-\frac{s+3}{2e^k}
    \left(\frac{2ek}{s-3}\right)^{(s-3)/2},
    \end{equation}
    and the accumulated error term is
    \[
    R_4(\mathcal A,D)
    :=
    \sum_{\substack{d\mid P(z)\\ d<D}}
    \tau_4(d)|r_d|,
    \]
    where
    \[
    \tau_4(n)
    :=
    \#\bigl\{(d_1,d_2,d_3,d_4)\in\mathbb Z_{>0}^4:
    d_1d_2d_3d_4=n\bigr\}.
    \]
\end{theorem}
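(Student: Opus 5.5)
This statement is essentially a repackaging of Theorem~7.7 of~\cite{friedlander_iwaniec2010opera}, so the plan is to reduce to that result rather than rebuild the sieve. Theorem~7.1 there lets us write \(S(\mathcal A,z)\) through a lower-bound sieve \(\Lambda^-=(\lambda_d^-)\) supported on \(d\mid P(z)\), \(d<D\), with \(|\lambda_d^-|\) bounded by a divisor-type function. Then
\[
S(\mathcal A,z)\ \ge\ \sum_{d\mid P(z)}\lambda_d^-|\mathcal A_d| \;=\; X\sum_{d}\lambda_d^- g(d)+\sum_d\lambda_d^- r_d .
\]
The first sum is the main term and the second is the remainder.

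\textbf{Step 1: main term.} I would invoke Theorem~7.7, which constructs the \(\Lambda^-\Lambda^2\) weights (a beta-sieve-type \(\Lambda^-\) for the small primes composed with a Selberg \(\Lambda^2\) upper sieve). It gives
\[
\sum_d\lambda_d^- g(d)\ \ge\ V(z)\Bigl(1-\tfrac{s+3}{2e^{k}}\bigl(\tfrac{2ek}{s-3}\bigr)^{(s-3)/2}\Bigr)
\]
under the dimension condition~\eqref{sieveDimension}. The only bookkeeping is the following.
\begin{enumerate}[label=(\roman*)]
\item The book's hypothesis is stated with the product over all primes \(w\le p<z\). Here it is restricted to \(p\in\mathcal P\). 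This is harmless: setting \(g(p)=0\) for \(p\notin\mathcal P\) gives the same \(V(z)\), the same sifted sum, and the same condition.
\item The parameters \(\kappa,K\) enter the book's bound only through \(k=\kappa+\log K\), via the Rankin-type estimate \(e^{k}(\ldots)^{\ldots}\) that the book proves.
\end{enumerate}

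\textbf{Step 2: the range condition.} I would check that \(s\ge 2k+3\) is exactly the range in which the book's estimate applies. In that range \((2ek/(s-3))\le e\) makes the displayed bound meaningful.

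\textbf{Step 3: remainder term.} In the \(\Lambda^-\Lambda^2\) construction, \(\lambda_d^-\) is a convolution of two sieve weights, each bounded by \(\tau_2\) in absolute value (or by \(1\) and \(\tau_3\), depending on the exact form in the book). This gives \(|\lambda_d^-|\le 2\tau_4(d)\), or at least \(|\lambda_d^-|\le\tau_4(d)\) up to the factor \(2\) coming from the two pieces of the Friedlander--Iwaniec decomposition \(\Lambda^-\Lambda^2\). Summing over \(d<D\) then yields the bound \(2R_4(\mathcal A,D)\).

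I expect this coefficient bound to be the main obstacle: pinning down the precise coefficient bound stated in the book, and the constant \(2\), rather than any new argument. I would read off the explicit form of the weights in Theorem~7.7 and bound the convolution termwise by \(\tau_4\).
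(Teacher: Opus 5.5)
Your proposal matches the paper's treatment: the paper gives no independent proof of this theorem, and simply records it as the form of Theorem~7.7 of Friedlander--Iwaniec's \emph{Opera de Cribro}, within the general framework of their Theorem~7.1, which is the reduction you describe. In particular, the remainder term $2R_4(\mathcal A,D)$ is read off from the book rather than derived, so your Step~3 only needs to confirm the coefficient bound stated there, not reconstruct it.
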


\section{The pre-sieved sequence}\label{sec:presieve}

Throughout Sections~\ref{sec:presieve}--\ref{sec:estimates}, let \(N\geq2\) be fixed. We first construct an auxiliary indexed sequence, from which the pre-sieved sequence and the sifting data used in Theorem~\ref{thm:FI} will be obtained.

For \(1\leq n\leq N-1\), define
\[
a_n =
\begin{cases}
    n(N-n), & \text{if }N\text{ is even},\\[4pt]
    \dfrac{n(N-n)}{2}, & \text{if }N\text{ is odd}.
\end{cases}
\]
Let the auxiliary sequence be
\[
\mathcal B=\mathcal B(N):=(a_n)_{1\leq n\leq N-1}
\]
and put
\[
Y:=N-1.
\]
We regard \(\mathcal B\) as an indexed sequence, so that the repeated values arising from the symmetry \(a_n=a_{N-n}\) are retained with their natural multiplicities.

We now define a multiplicative local density function \(g\) on the square-free integers. If \(N\) is even, then \(2\mid a_n\) precisely when \(n\) is even. If \(N\) is odd, then exactly one of \(n\) and \(N-n\) is even, and \(2\mid a_n\) precisely when this even factor is divisible by \(4\). Equivalently,
\[
n\equiv0\pmod4
\qquad\text{or}\qquad
n\equiv N\pmod4.
\]
In either case, the corresponding local density is
\[
g(2)=\frac12.
\]

For an odd prime \(p\), division by two is invertible modulo \(p\), so the condition \(p\mid a_n\) is equivalent to
\[
n\equiv0\pmod p
\qquad\text{or}\qquad
n\equiv N\pmod p.
\]
These residue classes are distinct when \(p\nmid N\) and coincide when \(p\mid N\). We therefore define
\[
g(p)=
\begin{cases}
\dfrac12, & p=2,\\[6pt]
\dfrac2p, & p\neq2\text{ and }p\nmid N,\\[6pt]
\dfrac1p, & p\neq2\text{ and }p\mid N,
\end{cases}
\]
and extend \(g\) multiplicatively to the square-free integers.

For square-free \(d\), let
\[
\mathcal B_d
:=
(a_n)_{\substack{1\leq n\leq N-1\\ d\mid a_n}}
\]
and define \(\rho_d\) by
\begin{equation}\label{eq:ambient-count}
    |\mathcal B_d|
    =
    Yg(d)+\rho_d.
\end{equation}

We now pre-sieve at the prime \(3\). Let
\[
I_3:=\{1\leq n\leq N-1:3\nmid a_n\}
\]
and define
\[
\mathcal A=\mathcal A(N):=(a_n)_{n\in I_3}.
\]
Put
\[
X:=|I_3|=|\mathcal A|.
\]
Since \(|\mathcal B|=Y\), applying \eqref{eq:ambient-count} with \(d=3\) gives
\begin{equation}\label{eq:presieved-size}
    X = Y(1-g(3))-\rho_3.
\end{equation}

A direct count modulo \(3\) also gives
\[
X =
\begin{cases}
    \dfrac{2N}{3}, & N\equiv0\pmod3,\\[6pt]
    \dfrac{N-1}{3}, & N\equiv1\pmod3,\\[6pt]
    \dfrac{N+1}{3}, & N\equiv2\pmod3.
\end{cases}
\]

We take
\[
\mathcal P:=\{p:p\text{ is prime and }p\neq3\}
\]
as the set of sifting primes. For square-free \(d\) composed of primes in \(\mathcal P\), define
\[
\mathcal A_d
:=
(a_n)_{\substack{n\in I_3\\ d\mid a_n}}.
\]
Since \(3\nmid d\), inclusion--exclusion over the pre-sieved prime \(3\) gives
\[
|\mathcal A_d|
=
|\mathcal B_d|-|\mathcal B_{3d}|.
\]

Using \eqref{eq:ambient-count} and the multiplicativity of \(g\), we obtain
\begin{align*}
|\mathcal A_d|
&=
|\mathcal B_d|-|\mathcal B_{3d}|\\
&=
Yg(d)+\rho_d-Yg(3d)-\rho_{3d}\\
&=
Y(1-g(3))g(d)+\rho_d-\rho_{3d}.
\end{align*}
On the other hand, \eqref{eq:presieved-size} gives
\[
Y(1-g(3))=X+\rho_3.
\]
Consequently,
\[
|\mathcal A_d|
=
Xg(d)+r_d,
\]
where
\begin{equation}\label{eq:presieved-remainder}
r_d
:=
\rho_d-\rho_{3d}+g(d)\rho_3.
\end{equation}
Thus the pre-sieved sequence has the same local density \(g(d)\) at square-free moduli composed of primes in \(\mathcal P\), but with the modified remainder terms \(r_d\). Since every term of \(\mathcal A\) is already coprime to \(3\), for
\(z>3\) the condition
\[
(a_n,P(z))=1
\]
is equivalent to \(a_n\) having no prime divisor less than \(z\).

\section{Explicit estimates for the pre-sieved sequence}\label{sec:estimates}
We now record the explicit estimates needed in the proof of Theorem~\ref{thm:main}. That proof will be divided into three ranges. Define
\[
N_0:=117465180365547648498934439.
\]
For \(N>N_0\), the proof will use the sifting parameter
\[
z=N^{1/16.5}.
\]
We also define
\[
z_0:=N_0^{1/16.5}=38.0184\dots,
\qquad
z_1:=10^8,
\qquad
N_1:=z_1^{16.5}=10^{132}.
\]
The three ranges are as follows:
\begin{enumerate}
    \item \(2\leq N\leq N_0\). This initial finite range will be handled directly by Lemma~\ref{lem:smallN}.
    \item \(N_0<N<N_1\), equivalently \(z_0<z<z_1\). In this range, positivity of the pre-sieved sum will be established by finite verification.
    \item \(N\geq N_1\), equivalently \(z\geq z_1\). In this range, positivity will follow from explicit analytic estimates.
\end{enumerate}

The estimates in this section are organised according to this division. We first obtain bounds valid for all \(z\geq z_0\), which are used in the finite verification range, and then sharper estimates valid for \(z\geq z_1\), which are used in the analytic range.

The estimates below differ from those of Dudek and Dunn in three main respects. In their setting, the extremal case \(w=3\) in the dimension condition requires the choice \(K=3\). Pre-sieving removes the prime \(3\) and hence eliminates this case, allowing a substantially smaller value of \(K\) in the present argument. We also use the restrictions \(z\geq z_0\) and, in the analytic range, \(z\geq z_1\), rather than estimates uniform over a larger range. Finally, several tail bounds and worst-case substitutions are replaced by sharper analytic estimates or finite verification.

We first require explicit upper bounds for products of the form
\[
\prod_{\substack{w\leq p<z\\ p\in\mathcal P}}
(1-g(p))^{-1}.
\]
The following lemma gives both the estimate used in the finite verification range and the sharper estimate used in the analytic range. Although the eventual application takes \(z=N^{1/16.5}\), in the following lemma \(z\) is regarded as an independent real variable.

\begin{lemma}\label{lem:K-presieved}
    The following estimates hold.
    \begin{enumerate}
        \item For every \(z\geq z_0\) and every \(2\leq w<z\),
        \[
        \prod_{\substack{w\leq p<z\\ p\in\mathcal P}}
        (1-g(p))^{-1}
        \leq
        1.146
        \left(\frac{\log z}{\log w}\right)^2.
        \]
        
        \item For every \(z\geq z_1\) and every \(2\leq w<z\),
        \[
        \prod_{\substack{w\leq p<z\\ p\in\mathcal P}}
        (1-g(p))^{-1}
        \leq
        1.097
        \left(\frac{\log z}{\log w}\right)^2.
        \]
    \end{enumerate}
\end{lemma}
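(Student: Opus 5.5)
The first step is to remove the dependence on \(N\). Since \(g(p)\le 2/p\) for odd \(p\) and \(g(2)=1/2\), we have \((1-g(p))^{-1}\le (1-2/p)^{-1}\) for every odd \(p\in\mathcal P\), and the factor at \(p=2\) equals exactly \(2=(1-1/2)^{-1}\). It therefore suffices to bound
\[
\Pi(w,z):=\prod_{\substack{w\le p<z\\ p\ne3}}h(p)^{-1},
\qquad h(2)=\tfrac12,\quad h(p)=1-\tfrac2p\ (p\ge5),
\]
by \(K(\log z/\log w)^2\) with \(K=1.146\), respectively \(K=1.097\). We write \((1-2/p)^{-1}=(1-1/p)^{-2}\,(1-1/(p-1)^2)^{-1}\) for \(p\ge5\). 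This separates \(\Pi(w,z)\) into the square of a Mertens-type product and a convergent correction product \(\prod_{p\ge5}(1-1/(p-1)^2)^{-1}\), which is the twin-prime-type constant restricted to \(p\ge5\).

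The analysis then splits by the size of \(w\).

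\emph{Large \(w\).} Take \(w\ge w_0\) with \(w_0\) around \(10^3\) to \(10^5\). We use explicit Mertens bounds of Rosser--Schoenfeld/Dusart type, \(\prod_{p<x}(1-1/p)^{-1}=e^\gamma\log x\,(1+O^*(\epsilon(x)))\) with explicit \(\epsilon(x)\) (e.g.\ \(1/(2\log^2 x)\)). The ratio of the bounds at \(z\) and \(w\) gives
\[
\prod_{w\le p<z}(1-1/p)^{-2}\le\Bigl(\frac{\log z}{\log w}\Bigr)^2\frac{(1+\epsilon(z))^2}{(1-\epsilon(w))^2}.
\]
The correction tail \(\prod_{p\ge w}(1-1/(p-1)^2)^{-1}\) is at most \(\exp(\sum_{n\ge w-1}2/n^2)\le 1+O(1/w)\). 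The resulting factor is then close to \(1\), comfortably below \(1.097\).

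\emph{Small \(w\).} Here \(w\) ranges over \([2,w_0)\). Since the left side is a step function in \(w\) and \(\log w\) increases, the worst case for fixed \(z\) occurs at \(w\) equal to a prime (or \(w=2\), or \(w\) just above \(3\), which behaves like \(w=5\)'s product with \(\log w\) slightly larger than \(\log3\)). We split
\[
\Pi(w,z)=\Pi(w,w_0)\,\Pi(w_0,z)\le \Pi(w,w_0)\,C_0\Bigl(\frac{\log z}{\log w_0}\Bigr)^2,
\]
so the required constant is \(\max_w \Pi(w,w_0)\,C_0(\log w/\log w_0)^2\), a finite computation over primes below \(w_0\). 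This is wasteful when \(z\) is close to \(w_0\), so for \(z\) in \([z_0,w_0]\) (or in a larger computed range) we instead evaluate \(\Pi(w,z)(\log w/\log z)^2\) directly over all prime pairs. The role of \(z\ge z_0\approx38\) is that for small \(z\) the ratio can exceed \(1.146\): with \(w\) near \(2\), \(z\) just above a cluster of primes, the factor \(2\) at \(p=2\) and the absence of \(3\) interact. The restriction to \(z\ge z_0\) removes those cases. For part (2), the condition \(z\ge z_1=10^8\) forces the Mertens part at the \(z\) end to be very accurate, so the constant is governed by \(\max_{w}\Pi(w,\infty\text{-ish})\) behaviour: essentially \(\limsup_{z}\Pi(w,z)(\log w/\log z)^2=\Pi_{\text{loc}}(w)\) with explicit Mertens error at \(10^8\).

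The main obstacle is making the small-\(w\), moderate-\(z\) regime rigorous without an infinite check. I would handle it by computing, for each prime \(w<w_0\), the quantity \(A(w)=\lim_{z\to\infty}\Pi(w,z)(\log w/\log z)^2=(\log w)^2\,e^{-2\gamma}\prod_{p<w,p\ne3}h(p)\cdot\prod_{p\ge5}(1-1/(p-1)^2)^{-1}\cdot(\text{adjust for }p=2,3)\) and bounding the deviation for finite \(z\) by the explicit Mertens error \(\epsilon(z)\). For part (1) this leaves \(z\in[z_0,Z]\) with \(Z\) moderate (say \(10^6\)), which is checked by direct computation, since the supremum over \(z\) is attained just below primes. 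The likely extremal configuration is \(w=2\) or \(w=5\) with \(z\) just after a prime. I expect the stated constants \(1.146\) and \(1.097\) to arise exactly as the maxima of these computations plus the Mertens error margins.
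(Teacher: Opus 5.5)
Your plan is essentially the paper's proof. It has the same $N$-free reduction via $g(p)\le 2/p$ and a Rosser--Schoenfeld estimate for large $w$; the paper takes logarithms and bounds $2\sum 1/p$ plus the tail of $2\sum 1/(p(p-2))$, which is equivalent to your factorisation into a squared Mertens product times a twin-prime-type correction. It then splits the product at $283/293$ with a finite supremum over prime $w$, checks $z_0\le z<286$ directly, and handles part (2) by the same split with the $z$-end error evaluated at $10^8$.

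When you carry it out, fix three details:
\begin{enumerate}
\item For fixed $w$, the supremum over $z$ is approached at right-hand limits $z\to q^{+}$, and at $z=z_0$ itself, not "just below primes". This matters because the true maximum, about $1.1457$ at $w=5$, $z\to 43^{+}$, lies barely under $1.146$.
\item Your limiting constant $A(w)$ must carry $e^{2\gamma}$, not $e^{-2\gamma}$.
\item The explicit upper bounds of Rosser--Schoenfeld, whether for $\prod_{p\le x}p/(p-1)$ or for $\sum_{p\le x}1/p$, hold only for $x\ge 286$. This is why the direct check must cover all $z<286$.
\end{enumerate}
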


\begin{proof}
    The estimates are uniform in \(N\), since the individual factors admit bounds independent of \(N\). Indeed,
    \[
    (1-g(2))^{-1}=2,
    \]
    while, for odd primes \(p\neq3\),
    \[
    (1-g(p))^{-1}
    \leq
    \left(1-\frac{2}{p}\right)^{-1}.
    \]
    The numerical computations below can be reproduced using the verification repository \cite{VerificationRepo}. The relevant script is \path{verify_K_presieved.py}.
    
    The threshold \(286\) in the case division below is the validity threshold for the upper bound in Theorem~5 of Rosser and Schoenfeld~\cite{rosser_schoenfeld1962} used to estimate the prime harmonic sum.

    \medskip
    
    \noindent
    We first prove assertion~\textup{(1)}.
    
    \medskip
    
    \noindent
    \textbf{Case 1: \(w\geq286\).}
    If \(z\leq293\), then the product is empty, since there are no primes in the interval \((283,293)\). Hence the desired estimate is immediate. We may therefore suppose that \(z>293\). Put
    \[
    y:=\max\{w,293\}.
    \]
    Using the Taylor expansion of \(-\log(1-u)\), we obtain
    \begin{align}\label{eq:log-product-bound}
        \log\left(
        \prod_{\substack{w\leq p<z\\ p\in\mathcal P}}
        (1-g(p))^{-1}
        \right)
        &\leq
        \sum_{y\leq p<z}
        -\log\left(1-\frac{2}{p}\right) \notag\\
        &=
        2\sum_{y\leq p<z}\frac{1}{p} + \sum_{y\leq p<z} \sum_{j=2}^{\infty}\frac1j\left(\frac2p\right)^j \notag\\
        &\leq
        2\sum_{y\leq p<z}\frac{1}{p} + 2\sum_{y\leq p<z}\frac{1}{p(p-2)}.
    \end{align}
    By Theorem~5 of Rosser and Schoenfeld,
    \begin{align*}
        2\sum_{y\leq p<z}\frac{1}{p}
        &<
        2\log\left(\frac{\log z}{\log y}\right) + \frac{1}{\log^2 z} + \frac{1}{\log^2 y} \\
        &\leq
        2\log\left(\frac{\log z}{\log w}\right) + \frac{2}{\log^2 293}.
    \end{align*}
    For the remaining sum, we use \(\zeta(2)=\pi^2/6\) to obtain
    \begin{align}\label{eq:zeta-2}
        \sum_{y\leq p<z}\frac{1}{p(p-2)}
        &\leq
        \sum_{293\leq p<100000}\frac{1}{p(p-2)} + \sum_{n\geq99998}\frac{1}{n^2} \notag\\
        &=
        \sum_{293\leq p<100000}\frac{1}{p(p-2)} + \frac{\pi^2}{6} - \sum_{n=1}^{99997}\frac{1}{n^2}.
    \end{align}
    A direct computation gives
    \[
    \frac{2}{\log^2 293} + 2\left(\sum_{293\leq p<100000}\frac{1}{p(p-2)} + \frac{\pi^2}{6} - \sum_{n=1}^{99997}\frac{1}{n^2}\right)
    <
    \log(1.0651).
    \]
    Consequently,
    \[
    \prod_{\substack{w\leq p<z\\ p\in\mathcal P}} (1-g(p))^{-1}
    \leq
    1.0651 \left(\frac{\log z}{\log w}\right)^2.
    \]
    
    \medskip
    
    \noindent
    \textbf{Case 2: \(2\leq w<286\) and \(z\geq286\).}
    Since there are no primes in the interval \((283,293)\), we split the product as
    \[
    \prod_{\substack{w\leq p<z\\ p\in\mathcal P}} (1-g(p))^{-1}
    \leq
    \left(\prod_{\substack{w\leq p\leq283\\ p\in\mathcal P}} (1-g(p))^{-1}\right) \left(\prod_{293\leq p<z} \left(1-\frac{2}{p}\right)^{-1}\right),
    \]
    where the second product is interpreted as the empty product when
    \(286\leq z\leq293\).
    
    The estimate
    \[
    \prod_{293\leq p<z}\left(1-\frac2p\right)^{-1}
    \leq
    1.0651\left(\frac{\log z}{\log293}\right)^2
    \]
    holds throughout \(z\geq286\): for \(286\leq z\leq293\), the product
    is empty and the right-hand side exceeds \(1\), while for \(z>293\)
    it follows from Case~1 with lower endpoint \(293\). Therefore
    \begin{align*}
        \prod_{\substack{w\leq p<z\\ p\in\mathcal P}} (1-g(p))^{-1}
        &\leq
        1.0651 \left(\prod_{\substack{w\leq p\leq283\\ p\in\mathcal P}} (1-g(p))^{-1}\right) \left(\frac{\log z}{\log293}\right)^2 \\
        &=
        1.0651 \left(\frac{\log w}{\log293}\right)^2 \left(\prod_{\substack{w\leq p\leq283\\ p\in\mathcal P}} (1-g(p))^{-1}\right) \left(\frac{\log z}{\log w}\right)^2.
    \end{align*}
    A finite computation, using the upper bounds for the factors stated at the beginning of the proof, gives
    \[
    \sup_{2\leq w<286} \left\{1.0651 \left(\frac{\log w}{\log293}\right)^2 \prod_{\substack{w\leq p\leq283\\ p\in\mathcal P}} (1-g(p))^{-1} \right\}
    \leq
    1.1282.
    \]
    It is enough to check \(w=2\), the prime endpoints \(5\leq w\leq283\), and the limiting endpoint \(w\to286^{-}\). Indeed, the finite product is constant between consecutive primes, while \((\log w)^2\) is increasing, and the only jumps in the product occur when \(w\) passes a prime.
    
    Thus
    \[
    \prod_{\substack{w\leq p<z\\ p\in\mathcal P}} (1-g(p))^{-1}
    \leq
    1.1282 \left(\frac{\log z}{\log w}\right)^2.
    \]
    
    \medskip
    
    \noindent
    \textbf{Case 3: \(z_0\leq z<286\).}
    For a fixed set of included primes, the quantity
    \[
    \prod_{\substack{w\leq p<z\\ p\in\mathcal P}} (1-g(p))^{-1} \left(\frac{\log w}{\log z}\right)^2
    \]
    is decreasing as a function of \(z\). Thus, on each interval between consecutive primes, its largest value is approached at the left endpoint. The only points at which the set of included primes can change are the primes themselves. Hence the supremum over \(z_0\leq z<286\) is obtained among \(z=z_0\) and the right-hand limiting values \(z\to q^{+}\), where \(q\) runs over the primes satisfying \(z_0\leq q<286\).
    
    For each such value of \(z\), it is enough to check \(w=2\) and the prime endpoints \(5\leq w<z\), since the finite product is constant between consecutive primes as a function of \(w\), whereas \((\log w)^2\) is increasing.
    
    A finite computation, again using the upper bounds for the factors stated at the beginning of the proof, gives
    \[
    \sup_{\substack{z_0\leq z<286\\2\leq w<z}} \left\{\prod_{\substack{w\leq p<z\\ p\in\mathcal P}} (1-g(p))^{-1} \left(\frac{\log w}{\log z}\right)^2 \right\}
    \leq
    1.1458.
    \]
    The largest value in this finite check occurs in the right-hand limit \(z\to43^{+}\), with \(w=5\). Since \(1.1458<1.146\), Cases~1–3 prove assertion~\textup{(1)}.
    
    \medskip
    
    \noindent
    We now prove assertion~\textup{(2)}. Since \(z\geq z_1>293\), only the analogues of Cases~1 and~2 are required.
    \medskip
    
    \noindent
    \textbf{Case 4: \(w\geq286\) and \(z \geq z_1\).}
    We begin from inequality~\eqref{eq:log-product-bound} from Case~1. Again employing Theorem~5 of Rosser and Schoenfeld~\cite{rosser_schoenfeld1962}, we have
    \begin{align*}
        2\sum_{y\leq p<z}\frac{1}{p}
        &<
        2\log\left(\frac{\log z}{\log y}\right) + \frac{1}{\log^2 z} + \frac{1}{\log^2 y} \\
        &\leq
        2\log\left(\frac{\log z}{\log w}\right) + \frac{1}{\log^2 10^8} + \frac{1}{\log^2 293}.
    \end{align*}
    Combining this estimate with \eqref{eq:zeta-2}, a direct computation gives
    \[
    \frac{1}{\log^2 10^8} + \frac{1}{\log^2 293} + 2\left(\sum_{293\leq p<100000}\frac{1}{p(p-2)} + \frac{\pi^2}{6} - \sum_{n=1}^{99997}\frac{1}{n^2}\right)
    <
    \log(1.0356).
    \]
    Consequently,
    \[
    \prod_{\substack{w\leq p<z\\ p\in\mathcal P}} (1-g(p))^{-1}
    \leq
    1.0356 \left(\frac{\log z}{\log w}\right)^2.
    \]

    \medskip
    
    \noindent
    \textbf{Case 5: \(2\leq w<286\) and \(z\geq z_1\).}
    Again, using the fact that there are no primes in the interval \((283,293)\), we split the product as
    \[
    \prod_{\substack{w\leq p<z\\ p\in\mathcal P}} (1-g(p))^{-1}
    \leq
    \left(\prod_{\substack{w\leq p\leq283\\ p\in\mathcal P}} (1-g(p))^{-1}\right) \left(\prod_{293\leq p<z} \left(1-\frac{2}{p}\right)^{-1}\right).
    \]
    
    The estimate from Case~4 gives
    \[
    \prod_{293\leq p<z} \left(1-\frac{2}{p}\right)^{-1}
    \leq
    1.0356 \left(\frac{\log z}{\log293}\right)^2.
    \]
    Therefore
    \begin{align*}
        \prod_{\substack{w\leq p<z\\ p\in\mathcal P}} (1-g(p))^{-1}
        &\leq
        1.0356 \left(\prod_{\substack{w\leq p\leq283\\ p\in\mathcal P}} (1-g(p))^{-1}\right) \left(\frac{\log z}{\log293}\right)^2 \\
        &=
        1.0356 \left(\frac{\log w}{\log293}\right)^2 \left(\prod_{\substack{w\leq p\leq283\\ p\in\mathcal P}} (1-g(p))^{-1}\right) \left(\frac{\log z}{\log w}\right)^2.
    \end{align*}
    A finite computation, using the same finite check used in Case~2, gives
    \[
    \sup_{2\leq w<286} \left\{1.0356 \left(\frac{\log w}{\log293}\right)^2 \prod_{\substack{w\leq p\leq283\\ p\in\mathcal P}} (1-g(p))^{-1} \right\}
    \leq
    1.0969.
    \]

    The largest value occurs at \(w=5\). Consequently,
    \[
    \prod_{\substack{w\leq p<z \\ p\in\mathcal P}} (1-g(p))^{-1}
    \leq
    1.0969 \left(\frac{\log z}{\log w}\right)^2.
    \]
    
    From Cases~4 and 5, assertion~\textup{(2)} holds.

    This completes the proof.
\end{proof}

We next require a lower bound for the main-term factor \(V(z)\). Since the prime \(3\) has been omitted from the sifting set, this factor is larger than the analogous factor in the setting without pre-sieving. We also need the estimate to hold from the cutoff \(z_1=10^8\), rather than the cutoff \(10^{10}\) used in the corresponding estimate of Dudek and Dunn. The following lemma supplies the required lower bound.

\begin{lemma}\label{lem:V-presieved}
    For \(z\geq z_1\), we have
    \[
    V(z)\geq \frac{1.241}{\log^2 z}.
    \]
\end{lemma}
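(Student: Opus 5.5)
Since \(V(z)=\prod_{p<z,\,p\neq3}(1-g(p))\) and every factor satisfies \(1-g(p)\geq 1-2/p\) for odd \(p\neq3\), while \(1-g(2)=1/2\), the bound is uniform in \(N\). We have
\[
V(z)\;\geq\;\frac12\prod_{5\leq p<z}\Bigl(1-\frac2p\Bigr).
\]
We therefore compare this product with \(\prod_{p<z}(1-1/p)^2\) and invoke Mertens-type estimates of Rosser and Schoenfeld. Write
\[
\prod_{5\leq p<z}\Bigl(1-\frac2p\Bigr)
=\prod_{5\leq p<z}\frac{1-2/p}{(1-1/p)^2}\cdot\prod_{5\leq p<z}\Bigl(1-\frac1p\Bigr)^2 .
\]
The second factor equals \(\bigl(\tfrac12\cdot\tfrac23\bigr)^{-2}\prod_{p<z}(1-1/p)^2=9\prod_{p<z}(1-1/p)^2\). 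By the explicit lower bound of Rosser and Schoenfeld (Theorem~7 / (3.26)), for \(z\) large we have
\[
\prod_{p<z}\Bigl(1-\frac1p\Bigr)>\frac{e^{-\gamma}}{\log z}\Bigl(1-\frac{1}{2\log^2 z}\Bigr).
\]
Care is needed with \(p<z\) versus \(p\leq z\), but the stated bound for \(p\leq x\) is a lower bound for \(p<z\) as well. The first factor is \(\prod_{5\leq p<z}\bigl(1-\tfrac{1}{(p-1)^2}\bigr)\). Since every term is less than \(1\), this decreases to the infinite product \(\prod_{p\geq5}\bigl(1-\tfrac{1}{(p-1)^2}\bigr)=C_2/\bigl(1-\tfrac14\bigr)\), where \(C_2=\prod_{p\geq3}\bigl(1-\tfrac1{(p-1)^2}\bigr)=0.6601618\ldots\) is the twin prime constant. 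Hence the first factor is at least \(\tfrac43C_2\).

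Combining these,
\[
V(z)\geq \frac12\cdot\frac43C_2\cdot 9\,e^{-2\gamma}\Bigl(1-\frac{1}{2\log^2 z}\Bigr)^2\frac{1}{\log^2 z}
=6C_2e^{-2\gamma}\Bigl(1-\frac{1}{2\log^2 z}\Bigr)^2\frac{1}{\log^2 z}.
\]
Numerically, \(6C_2e^{-2\gamma}\approx 6\cdot0.66016\cdot0.31525\approx1.2487\). At \(z=10^8\) we have \(\log^2 z\approx339.3\), so the correction factor is about \((1-0.00147)^2\approx0.99705\), giving roughly \(1.2450>1.241\). Monotonicity in \(z\) then finishes the argument, since the correction factor increases with \(z\).

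The main obstacle is rigor in the constants. First, I must use a certified lower bound for \(C_2\) rather than its decimal value. One way is to compute the partial product up to some bound and bound the tail below via \(\prod_{p>P}(1-1/(p-1)^2)\geq 1-\sum_{n\geq P}1/n^2\), as in \eqref{eq:zeta-2}. Second, I must check the validity range of the Rosser--Schoenfeld inequality, which holds for \(x>1\) in the form \(e^{-\gamma}/\log x\,(1-1/(2\log^2x))\) and hence certainly for \(z\geq10^8\). The margin of about \(0.004\) is comfortable, so crude tail bounds suffice, and the calculation can be recorded as a short verification script in the manner of Lemma~\ref{lem:K-presieved}.
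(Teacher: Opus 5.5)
Your proof is correct, but it takes a different route from the paper's. The paper does not factor $1-2/p$. It evaluates the finite product $\tfrac12\prod_{5\le p<10^8}(1-2/p)=0.00367979\ldots$ directly. It then bounds the tail $\prod_{10^8\le p<z}(1-2/p)$ from below, using $-\log(1-2/p)\le 2/p+2/(p(p-2))$ together with the upper bound for $\sum 1/p$ in Theorem~5 of Rosser--Schoenfeld. This gives $e^{5.821}/\log^2 z$ for the tail and hence the constant $0.0036797\,e^{5.821}\approx 1.2412$.

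Your identity $1-2/p=(1-1/p)^2\bigl(1-(p-1)^{-2}\bigr)$ instead does two things:
\begin{itemize}
\item it splits off an absolutely convergent product, bounded below by its limit $\tfrac43 C_2$, which needs only a short certified computation plus a $\sum n^{-2}$ tail;
\item it reduces the rest to the squared Mertens product, which Theorem~7 of Rosser--Schoenfeld handles directly.
\end{itemize}
What your route buys: it avoids running over all primes below $10^8$, gives a slightly better constant (about $1.2450$ at $z=z_1$), and would give a comparable bound for much smaller $z$. What the paper's route buys: it reuses Theorem~5, which it needs elsewhere anyway. It also reuses the finite product $\tfrac12\prod_{5\le p<v}(1-2/p)$, which is evaluated in any case in the Case~2 verification.

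One small correction. As stated by Rosser--Schoenfeld, the sharper form $1-\frac{1}{2\log^2 x}$ of the Mertens lower bound requires $x\ge 285$. The form valid for all $x>1$ is $1-\frac{1}{\log^2 x}$. This is harmless for $z\ge 10^8$. Even the weaker form gives $6C_2e^{-2\gamma}\bigl(1-\frac{1}{\log^2 z_1}\bigr)^2\approx 1.2413>1.241$. The margin then shrinks to roughly $3\cdot 10^{-4}$, however, so you would need a certified $C_2\ge 0.6601$ rather than a crude bound.
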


\begin{proof}
    Since \(g(2)=1/2\) and \(g(p)\leq2/p\) for every odd prime \(p\), we have
    \[
    V(z)
    =
    \prod_{\substack{p<z\\ p\in\mathcal P}}(1-g(p))
    \geq
    \frac{1}{2} \prod_{5\leq p<z} \left(1-\frac{2}{p}\right)
    = 
    \frac{1}{2} \prod_{5\leq p < 10^8} \left(1-\frac{2}{p}\right) \prod_{10^8\leq p<z} \left(1-\frac{2}{p}\right).
    \]
    
    Using the Taylor expansion of \(-\log(1-u)\), we have
    \[
    -\log\left(1-\frac{2}{p}\right)
    =
    \frac{2}{p} + \sum_{j=2}^{\infty}\frac1j\left(\frac2p\right)^j
    \leq
    \frac{2}{p} + \frac{2}{p(p-2)}.
    \]
    Summing over \(10^8\leq p<z\), we obtain
    \begin{equation}\label{eq:logprod}
    -\log \prod_{10^8\leq p<z}\left(1-\frac{2}{p}\right)
    \leq
    2\sum_{10^8\leq p<z}\frac{1}{p} + \sum_{10^8\leq p<z}\frac{2}{p(p-2)}.
    \end{equation}
    For the first sum, Theorem~5 of Rosser and Schoenfeld~\cite{rosser_schoenfeld1962} gives
    \[
    \sum_{10^8\leq p<z}\frac{1}{p}
    <
    \log \log z - \log \log 10^8 + \frac{1}{2 \log^2 z} + \frac{1}{2 \log^2 10^8}.
    \]
    Since \(\log^2 z\geq\log^2 10^8\), it follows that
    \begin{align*}
    \sum_{10^8\leq p<z}\frac1p
    &<
    \log\log z
    -
    \left(
    \log\log 10^8-\frac{1}{\log^2 10^8}
    \right)\\
    &=
    \log\log z-2.9105269\dots.
    \end{align*}
    For the second sum in \eqref{eq:logprod}, we have the bound
    \[
    \sum_{10^8\leq p<z}\frac{2}{p(p-2)}
    \leq
    \sum_{n\geq 10^8}\frac{2}{n(n-2)}
    =
    \sum_{n\geq 10^8}\left(\frac{1}{n-2}-\frac{1}{n}\right)
    =
    \frac{1}{10^8-2}+\frac{1}{10^8-1}
    <
    10^{-7}.
    \]

    Therefore,
    \[
    -\log\left(\prod_{10^8 \leq p<z}\left(1-\frac{2}{p}\right)\right)
    <
    2\log\log z - 5.82105 + 10^{-7}
    <
    2\log\log z - 5.821.
    \]
    Equivalently,
    \[
    \prod_{10^8 \leq p<z}\left(1-\frac{2}{p}\right)
    >
    \frac{e^{5.821}}{\log^2 z}.
    \]
    
    A finite computation, reproduced by the script \path{verify_V_presieved_lower_bound.py} in the verification repository~\cite{VerificationRepo}, gives
    \[
    \frac{1}{2} \prod_{5\leq p < 10^8} \left(1-\frac{2}{p}\right)
    =
    0.00367979\ldots
    >
    0.0036797.
    \]
    Combining this with the previous lower bound for the tail product, we obtain
    \[
    V(z)
    \geq
    \frac{1}{2} \prod_{5\leq p < 10^8} \left(1-\frac{2}{p}\right) \prod_{10^8\leq p<z} \left(1-\frac{2}{p}\right)
    >
    0.0036797\, \frac{e^{5.821}}{\log^2 z}.
    \]
    Since
    \[
    0.0036797\,e^{5.821}
    =
    1.241196\ldots
    >
    1.241,
    \]
    it follows that
    \[
    V(z)
    \geq
    \frac{1.241}{\log^2 z}.
    \]
\end{proof}

It remains to control the accumulated error term appearing in the lower-bound sieve. The preceding lemmas establish the dimension condition and the required lower bound for the main-term factor \(V(z)\) in the range \(z\geq z_1\). The following lemma controls \(R_4(\mathcal A,D)\) when the sieve is taken at level \(D=z^{14.66}\). Together with Lemmas~\ref{lem:K-presieved} and~\ref{lem:V-presieved}, this supplies the analytic estimates needed for the range \(N\geq N_1\) in the proof of Theorem~\ref{thm:main}.

\begin{lemma}\label{lem:R-presieved}
    For \(z\geq z_1\), we have
    \[
    R_4(\mathcal A,z^{14.66})
    <
    0.563\,z^{14.66}\log^8 z.
    \]
\end{lemma}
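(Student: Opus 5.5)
Let \(D:=z^{14.66}\). The plan is to bound \(|r_d|\) pointwise by a small multiple of \(\omega(d)\)-type quantities, and then estimate the resulting divisor-type sum over squarefree \(d<D\) with \(3\nmid d\).

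\textbf{Step 1: pointwise bound for \(|\rho_d|\).} For squarefree \(d\), the condition \(d\mid a_n\) with \(1\leq n\leq N-1\) is, by the Chinese remainder theorem, a union of residue classes modulo \(d\) (modulo \(4\) at the prime \(2\) when \(N\) is odd). The number of classes is \(\nu(d):=\prod_{p\mid d}\nu(p)\), where \(\nu(p)\in\{1,2\}\) and \(g(d)=\nu(d)/d\). In the odd case at \(p=2\) there are \(2\) classes mod \(4\), which is consistent with the density \(1/2\).

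Each residue class contributes \(Y/d+\theta\) with \(|\theta|\leq1\), or \(Y/(2d)\)-type counts at the prime \(2\), with error at most \(1\) per class. This gives
\[
|\rho_d|\leq \nu(d)\leq 2^{\omega(d)}=\tau(d).
\]
Applying the same bound to \(3d\) gives \(|\rho_{3d}|\leq 2\tau(d)\), and \(|\rho_3|\leq 2\). By \eqref{eq:presieved-remainder},
\[
|r_d|\leq \tau(d)+2\tau(d)+2g(d).
\]
I would try to sharpen this to roughly \(|r_d|\leq 3\tau(d)\) by noting that \(\rho_d-\rho_{3d}\) counts the error in a combined set of \(\nu(d)\cdot 2\) classes modulo \(3d\). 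Either way, \(|r_d|\leq c\,\tau(d)\) with \(c\) a small constant, and the \(g(d)\) term contributes negligibly after summation.

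\textbf{Step 2: reduce to a divisor sum.} We now have
\[
R_4(\mathcal A,D)\leq c\sum_{\substack{d<D\\ d \text{ squarefree}}}\tau_4(d)\tau(d)=c\sum_{d<D}\mu^2(d)\,8^{\omega(d)}.
\]
I would bound this by Rankin's trick or by an explicit estimate of the form \(\sum_{d\le x}\mu^2(d)8^{\omega(d)}\leq C\,x\log^7 x\). Such an estimate can be proved inductively from \(\sum_{d\leq x}\mu^2(d)2^{\omega(d)}\) bounds, or crudely via
\[
\sum_{d<D}\mu^2(d)8^{\omega(d)}\leq D\sum_{d<D}\frac{\mu^2(d)8^{\omega(d)}}{d}\leq D\prod_{p<D}\Bigl(1+\frac8p\Bigr)\leq D\bigl(e^{\gamma}\log D\bigr)^8\cdot(\text{const}),
\]
using Rosser--Schoenfeld bounds for \(\sum 1/p\). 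The crude route gives \(\log^8 D=14.66^8\log^8 z\), which is far too large against \(0.563\). The target \(\log^8 z\) with a small constant therefore suggests exploiting the restriction \(d\mid P(z)\).

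\textbf{Step 3: use \(d\mid P(z)\).} The key point is to use \(d\mid P(z)\), so that only primes below \(z\) appear:
\[
\sum_{\substack{d\mid P(z)\\ d<D}}\mu^2(d)8^{\omega(d)}\leq D\prod_{\substack{p<z\\p\neq3}}\Bigl(1+\frac8p\Bigr).
\]
Then
\[
\log\prod_{\substack{p<z\\p\neq3}}\Bigl(1+\frac8p\Bigr)\leq 8\sum_{p<z}\frac1p-\log\Bigl(1+\frac83\Bigr)-\sum_{p}\Bigl(\frac8p-\log\Bigl(1+\frac8p\Bigr)\Bigr).
\]
Combining Rosser--Schoenfeld's \(\sum_{p<z}1/p<\log\log z+B+1/(2\log^2z)\) with a finite computation of the product up to \(10^8\) and an explicit tail estimate, as in Lemma~\ref{lem:V-presieved}, gives
\[
\prod_{\substack{p<z\\p\neq3}}(1+8/p)\leq C'\log^8 z.
\]
The constant \(C'\) must then satisfy \(c\,C'\leq 0.563\).

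\textbf{Main obstacle.} The main obstacle is numerical: getting \(c\,C'<0.563\). The factor \(e^{8B}\prod_p(1+8/p)e^{-8/p}\), with the \(p=3\) factor removed, is small, because \(e^{8B}\approx e^{2.1}\) is offset by the strongly convergent product, which is tiny at \(p=2,3,5\). The constant \(c\) is the real sensitivity, and I would invest in the sharpened remainder bound \(|\rho_d-\rho_{3d}|\leq\nu(d)\), obtained by counting the combined classes modulo \(3d\) as one block, to keep \(c\) near \(1\). If necessary, I would also save the factor lost by using \(D\) rather than \(D/d\)-weighted Rankin with exponent \(\sigma<1\).
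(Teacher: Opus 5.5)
Your argument is correct, but you handle the Euler product differently from the paper, and the difference matters more than you expect.

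\textbf{The paper's route.} The paper applies Rankin's trick with $\delta=1-\alpha/\log z$. It then bounds $\prod_{p<z,\,p\neq3}(1+8p^{-\delta})$ by $\exp\bigl(8\sum p^{-\delta}\bigr)$, using only the global estimates $\sum_{p<z}1/p<\log\log z+0.263$ and $\sum_{p<z}(\log p)/p<\log z$. Optimising $\alpha$ at $s=14.66$ recovers a factor $e^{8\alpha e^{\alpha}-14.66\alpha}\approx0.31$. Combined with $|r_d|\le\frac{19}{6}2^{\omega(d)}$, this lands just under $0.563$.

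\textbf{Your route.} You take $\delta=1$ (simply $1<D/d$) but keep $\prod_{p<z,\,p\neq3}(1+8/p)$ exact. The inequality $1+x\le e^x$ is very lossy at small primes; at $p=2$ alone it loses $e^4/5\approx11$. So the paper's bound $e^{8(0.263-1/3)}\log^8 z\approx0.57\log^8 z$ overestimates this product by a factor of about $65$. Numerically, $\frac{3}{11}e^{8\gamma}\prod_p(1+8/p)(1-1/p)^8\approx0.0086$. Your $C'$, including the factor $e^{8/\log^2 10^8}\approx1.024$ from the Rosser--Schoenfeld tail beyond $10^8$, is therefore about $0.009$.

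\textbf{Consequence.} Even your crude bound $|r_d|\le3\tau(d)+2g(d)\le\frac72\tau(d)$ for $d>1$ (with $r_1=0$) gives roughly $0.03\,D\log^8 z$, for every $D$. That is about eighteen times below what is needed. Your ``main obstacle'' is therefore not an obstacle: you need neither the sharpened remainder bound nor Rankin with exponent $\sigma<1$. As you justify the sharpening, it would in any case only give $2\nu(d)$ when $3\mid N$, since then two classes modulo $3$ survive.

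\textbf{What each buys.} The paper's route needs no computation beyond evaluating one exponential, and it exploits the size of $s$. Yours gives a much better constant, uniform in $D$. The cost is a finite product computation up to $10^8$, as in the proof of Lemma~\ref{lem:V-presieved}. Alternatively, use an explicit Mertens product bound together with the fact that each factor $(1+8/p)(1-1/p)^8$ is less than $1$, so the convergent product may be truncated at small primes. The two devices are compatible.

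\textbf{One small slip.} In your displayed bound for $\log\prod_{p<z,\,p\neq3}(1+8/p)$, you replace $\sum_{p<z}\bigl(8/p-\log(1+8/p)\bigr)$ by the sum over all primes. That goes the wrong way, since the terms are nonnegative and are being subtracted. Truncate at $p<10^8$ instead, which is legitimate because $z\ge10^8$. Or use the finite-product-plus-tail route you also describe, exactly as in the proof of Lemma~\ref{lem:V-presieved}.
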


\begin{proof}
    By the same residue-class argument as in the proof of Theorem~7.2 of Nathanson~\cite{nathanson1996additive}, the remainder terms associated with the auxiliary sequence \(\mathcal B\) satisfy
    \[
    |\rho_\ell|\leq 2^{\omega(\ell)}
    \]
    for every square-free \(\ell\). Moreover, \eqref{eq:presieved-size} together with the explicit formula for \(X\) gives
    \[
    |\rho_3| \leq \frac23.
    \]
    
    Let \(d\mid P(z)\) with \(d>1\). Since \(3\nmid d\), we have \(g(d)\leq1/2\), and hence \eqref{eq:presieved-remainder} gives
    \begin{align*}
    |r_d|
    &\leq
    |\rho_d|+|\rho_{3d}|+g(d)|\rho_3|\\
    &\leq
    2^{\omega(d)}+2^{\omega(3d)}+\frac13\\
    &=
    3\cdot2^{\omega(d)}+\frac13\\
    &\leq
    \frac{19}{6}\,2^{\omega(d)}.
    \end{align*}
    Here the last inequality follows from \(2^{\omega(d)}\geq2\). For \(d=1\), we have \(r_1=0\).
    
    Since \(d\mid P(z)\) is square-free,
    \[
    \tau_4(d)2^{\omega(d)}=\tau_8(d).
    \]
    Therefore, for \(0<\delta<1\), Rankin's trick gives
    \begin{align}
        R_4(\mathcal A,D)
        &=
        \sum_{\substack{d\mid P(z)\\d<D}} \tau_4(d)|r_d| \notag\\
        &\leq
        \frac{19}{6} \sum_{\substack{d\mid P(z)\\d<D}} \tau_8(d) \notag\\
        &\leq
        \frac{19}{6}D^\delta \sum_{d\mid P(z)} \frac{\tau_8(d)}{d^\delta} \notag\\
        &=
        \frac{19}{6}D^\delta \prod_{\substack{p<z\\ p\in\mathcal P}} \left(1+\frac8{p^\delta}\right). \label{eq:R4-presieved-rankin}
    \end{align}

    Using \(1+x<e^x\) for \(x>0\), we obtain
    \[
    R_4(\mathcal A,D)
    <
    \frac{19}{6}D^\delta \exp\left(8\sum_{\substack{p<z\\ p\in\mathcal P}} \frac{1}{p^\delta} \right).
    \]
    
    Write \(\delta=1-\eta\) for \(0< \eta < 1\). Then
    \[
    \frac{1}{p^\delta}
    =
    \frac{1}{p}e^{\eta\log p}.
    \]
    
    Since \(p<z\), the inequality \(e^x<1+xe^x\) gives
    \[
    e^{\eta\log p}
    <
    1+\eta(\log p)z^\eta.
    \]
    Thus
    \begin{align*}
    R_4(\mathcal A,D)
    &<
    \frac{19}{6}D^{1-\eta} \exp\left(8\sum_{\substack{p<z\\ p\in\mathcal P}}\frac{1}{p}\right) \exp\left(8\eta z^\eta \sum_{\substack{p<z\\ p\in\mathcal P}}\frac{\log p}{p} \right).
    \end{align*}
    
    Theorem~5 and the corollary to Theorem~6 of Rosser and Schoenfeld~\cite{rosser_schoenfeld1962} imply that, for \(z\geq z_1\),
    \[
    \sum_{p<z}\frac{1}{p}
    <
    \log\log z+0.263,
    \qquad
    \sum_{p<z}\frac{\log p}{p}
    <
    \log z.
    \]
    Since the prime \(3\) is omitted, the first estimate gives
    \[
    \sum_{\substack{p<z\\ p\neq3}}\frac{1}{p}
    <
    \log\log z + 0.263 - \frac{1}{3}.
    \]
    Also,
    \[
    \sum_{\substack{p<z\\ p\neq3}}\frac{\log p}{p}
    <
    \log z.
    \]
    Hence
    \[
    R_4(\mathcal A,D)
    <
    \frac{19}{6}D^{1-\eta} \exp\left(8\log\log z + 8\left(0.263-\frac{1}{3}\right) + 8\eta z^\eta\log z\right).
    \]
    
    Now put
    \[
    \eta=\frac{\alpha}{\log z}.
    \]
    Then \(z^\eta=e^\alpha\), and if \(D=z^s\), then
    \[
    D^{1-\eta}
    =
    D\exp(-\alpha s).
    \]
    Therefore
    \[
    R_4(\mathcal A,D)
    <
    \frac{19}{6} D\log^8 z \exp\left(8\left(0.263-\frac{1}{3}\right) + 8\alpha e^\alpha - \alpha s\right).
    \]
    
    We now take \(s=14.66\), so that \(D=z^{14.66}\). The value of \(\alpha\) minimising the exponent satisfies
    \[
    8e^\alpha(1+\alpha)=14.66
    \]
    and is \(0.32458\dots\). We therefore take \(\alpha=0.3246\). Since \(z\geq z_1\), the corresponding value \(\eta=\alpha/\log z\) lies in \((0,1)\). A direct computation gives
    \[
    \frac{19}{6} \exp\left(8\left(0.263-\frac{1}{3}\right) + 8(0.3246)e^{0.3246} - 14.66(0.3246)\right)
    <
    0.563.
    \]
    Thus
    \[
    R_4(\mathcal A,z^{14.66})
    <
    0.563\,z^{14.66}\log^8 z,
    \]
    as claimed.
\end{proof}

\section{Proof of Theorem~\ref{thm:main}}\label{sec:proof-main}
We begin by disposing of a finite initial range of \(N\). This follows from known explicit maximal prime-gap data~\cite{caldwell_etal_maximal_gaps}. The point of the following lemma is that, if \(N/m\) lies in a range where prime gaps are bounded, then choosing the largest prime \(p<N/m\) forces the complementary summand \(N-mp\) to be small. This gives a bound for \(\Omega(N-mp)\), while choosing \(m\) to be either \(1\) or a prime ensures that \(\Omega(mp)\leq2\).

Set
\[
G:=101412319996363310923.
\]
The table of known maximal prime gaps~\cite{caldwell_etal_maximal_gaps} shows that every prime \(q<G\) is followed by a prime \(q'\) satisfying
\[
q'-q\leq1854.
\]

\begin{lemma}\label{lem:smallN}
    Let \(m\) be either \(1\) or a prime, and let \(M\) be a positive integer. Suppose
    \[
    1854m\leq 2^{M-2}.
    \]
    Then for every
    \[
    2\leq N\leq Gm
    \]
    there exist positive integers \(a,b\) such that
    \[
    N=a+b,
    \qquad
    \Omega(ab)\leq M.
    \]
\end{lemma}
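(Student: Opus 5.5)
The idea is to take \(a=mp\), where \(p\) is the largest prime strictly below \(N/m\), and \(b=N-mp\). Since \(m\) is \(1\) or a prime, \(\Omega(a)\le 2\). It then suffices to show that \(b\) is a positive integer with \(b\le 1854m\), because then \(\Omega(b)\le\log_2 b\le \log_2(1854m)\le M-2\) by hypothesis, and hence \(\Omega(ab)\le M\).

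We first dispose of small \(N\) where no suitable prime exists. If \(N/m\le 2\), i.e.\ \(N\le 2m\), there is no prime below \(N/m\). In that case take \(a=1\), \(b=N-1\) (for \(N\ge 2\)). Then \(\Omega(ab)=\Omega(N-1)\le\log_2 N\le \log_2(2m)\le\log_2(1854m)\le M-2\), which handles this case. (When \(N=2\), simply \(a=b=1\).) The same fallback works whenever \(N\le 1854m\). So we may assume \(N>1854m\), in particular \(N/m>1854\), and therefore primes below \(N/m\) certainly exist.

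Now let \(p\) be the largest prime with \(p<N/m\), so \(b=N-mp>0\). Let \(p'\) be the next prime after \(p\); by maximality, \(p'\ge N/m\). Since \(p<N/m\le G\), the maximal-gap table quoted above gives \(p'-p\le 1854\). Hence \(N/m-p\le p'-p\le 1854\), that is, \(b=m(N/m-p)\le 1854m\). Combined with \(b\ge1\) and \(\Omega(b)\le\lfloor\log_2 b\rfloor\), this gives \(\Omega(b)\le M-2\).

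The only delicate point is the boundary bookkeeping. The statement allows \(N=Gm\), so \(N/m\) may equal \(G\). We need the prime \(p<N/m\le G\) to be covered by the gap data ("every prime \(q<G\) is followed by a prime within 1854"), which it is, since \(p<G\) strictly. We also need the case \(p'\ge N/m\) with \(p'\) possibly exceeding \(G\), which is harmless because the gap bound is stated for \(q<G\) and says nothing requiring \(q'<G\). Apart from this, and checking that the small-\(N\) fallback covers every case where \(N/m\le 2\), the argument is routine.
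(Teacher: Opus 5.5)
Your proof is correct and follows essentially the same route as the paper's. You use the fallback \(a=1\), \(b=N-1\) for small \(N\); otherwise you take \(a=mp\) with \(p\) the largest prime below \(N/m\), and the maximal-gap bound gives \(0<N-mp\le 1854m\). The only difference is that you extend the fallback to all \(N\le 1854m\). This is harmless but unnecessary, since \(N>2m\) already guarantees that a prime below \(N/m\) exists.
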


\begin{proof}
    We first handle the case \(N\leq2m\). Taking \(a=1\) and \(b=N-1\), we have
    \[
    \Omega(ab)=\Omega(N-1)\leq \log_2(N-1)<\log_2(1854m)\leq M-2<M,
    \]
    so the result follows in this case.
    
    We may therefore assume that \(N>2m\). Let \(p\) be the largest prime strictly less than \(N/m\), and let \(p'\) be the next prime after \(p\). Since
    \[
    2<\frac Nm\leq G,
    \]
    we have \(p<G\). Hence the table of maximal prime gaps gives
    \[
    p'-p\leq1854.
    \]
    As \(p<N/m\leq p'\), we have
    \[
    0<N-mp\leq m(p'-p)\leq1854m.
    \]
    So
    \[
    \Omega(N-mp)
    \leq
    \log_2(N-mp)
    \leq
    \log_2(1854m)
    \leq
    M-2.
    \]
    Now set
    \[
    a=mp,
    \qquad
    b=N-mp.
    \]
    Then \(a\) and \(b\) are positive integers, \(N=a+b\), and since \(m\) is either \(1\) or a prime and \(p\) is prime,
    \[
    \Omega(mp)\leq2.
    \]
    Therefore
    \[
    \Omega(ab)
    =
    \Omega(mp)+\Omega(N-mp)
    \leq
    2+(M-2)
    =
    M.
    \]
\end{proof}

The following lemma converts a positive lower bound for the sifted sum into a bound for \(\Omega(ab)\). The factor \(2\) arises from the quadratic upper bound \(a_n<N^2\), while the normalisation \(a_n=n(N-n)/2\) in the odd case contributes one additional prime factor when passing from \(a_n\) to \(ab\).

\begin{lemma}\label{lem:sieve-to-almost-prime}
    Let \(N\geq 2\), let \(r>1\), and put
    \[
    z=N^{1/r}.
    \]
    If
    \[
    S(\mathcal A,z)>0,
    \]
    then there exist positive integers \(a,b\) such that \(N=a+b\) and
    \[
    \Omega(ab)<2r
    \]
    when \(N\) is even, while
    \[
    \Omega(ab)<2r+1
    \]
    when \(N\) is odd.
\end{lemma}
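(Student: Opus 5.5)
Since \(S(\mathcal A,z)>0\), I will pick an index \(n\in I_3\) with \((a_n,P(z))=1\), and set \(a=n\), \(b=N-n\). Both are positive because \(1\le n\le N-1\), and \(N=a+b\).

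\textbf{Step 1: every prime factor of \(a_n\) is at least \(z\).} Because \(n\in I_3\), the prime \(3\) does not divide \(a_n\). By the remark at the end of Section~\ref{sec:presieve}, for \(z>3\) the condition \((a_n,P(z))=1\) means \(a_n\) has no prime divisor less than \(z\). When \(z\le 3\) the situation is only easier to arrange:
\begin{itemize}
\item if \(2<z\le3\), then \(2\in\mathcal P\) forces \(a_n\) odd, and \(3\nmid a_n\);
\item if \(z\le 2\), the condition that prime factors exceed \(z\) is vacuous.
\end{itemize}
In every case each prime factor \(p\) of \(a_n\) satisfies \(p\ge z\).

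\textbf{Step 2: count the prime factors of \(a_n\).} Writing \(a_n=\prod p_i\) with each \(p_i\ge z\), we get \(z^{\Omega(a_n)}\le a_n\). On the other hand,
\[
a_n\le n(N-n)\le N^2/4<N^2=z^{2r}.
\]
Comparing the two, \(\Omega(a_n)<2r\), with strict inequality. The case \(a_n=1\) gives \(\Omega=0\) and causes no trouble.

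\textbf{Step 3: pass from \(a_n\) to \(ab\).} If \(N\) is even, then \(ab=a_n\), so \(\Omega(ab)<2r\) directly. If \(N\) is odd, then \(ab=2a_n\), so \(\Omega(ab)=\Omega(a_n)+1<2r+1\).

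The only delicate point is the small-\(z\) edge case in Step~1, which needs the short case check above; the rest is immediate.
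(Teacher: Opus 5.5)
Your proof is correct and follows the paper's argument essentially verbatim: choose \(n\in I_3\) with \((a_n,P(z))=1\), deduce that every prime factor of \(a_n\) is at least \(z\), compare \(z^{\Omega(a_n)}\le a_n<N^2=z^{2r}\), and add one extra prime factor for the factor \(2\) when \(N\) is odd. Your small-\(z\) case split is harmless but not needed, since the paper treats all \(z\) at once: any prime \(p<z\) is either \(3\), which is excluded because \(n\in I_3\), or lies in \(\mathcal P\) and hence divides \(P(z)\).
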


\begin{proof}
    Since \(S(\mathcal A,z)>0\), there exists \(n\in I_3\) such that
    \[
    (a_n,P(z))=1.
    \]
    By the definition of \(I_3\), we also have \(3\nmid a_n\). Since \(\mathcal P\) contains every prime other than \(3\), these two conditions imply that \(a_n\) has no prime factor less than \(z\).
    
    Set
    \[
    t:=\Omega(a_n).
    \]
    Since every prime factor of \(a_n\) is at least \(z\), counted with multiplicity, we have
    \[
    z^t\leq a_n.
    \]
    On the other hand, in both parity cases,
    \[
    a_n
    \leq
    n(N-n)
    \leq
    \frac{N^2}{4}
    <
    N^2.
    \]
    Consequently,
    \[
    N^{t/r}
    =
    z^t
    \leq
    a_n
    <
    N^2.
    \]
    This implies
    \[
    \frac{t}{r}<2,
    \]
    and hence
    \[
    \Omega(a_n)=t<2r.
    \]
    
    Now set
    \[
    a:=n,
    \qquad
    b:=N-n.
    \]
    Then \(a\) and \(b\) are positive integers and \(N=a+b\). If \(N\) is even, then
    \[
    ab=a_n,
    \]
    so
    \[
    \Omega(ab)=\Omega(a_n)<2r.
    \]
    If \(N\) is odd, then
    \[
    ab=2a_n,
    \]
    and therefore
    \[
    \Omega(ab)=\Omega(a_n)+1<2r+1.
    \]
\end{proof}

\begin{proof}[Proof of Theorem~\ref{thm:main}]
    We use the three ranges specified at the beginning of Section~\ref{sec:estimates}.
    
    \medskip
    
    \noindent
    \textbf{Case 1: \(2\leq N\leq N_0\).}
    
    Apply Lemma~\ref{lem:smallN} with \(m = 1158293\) (which is prime) and \(M=33\).
    Indeed,
    \[
    1854m
    =
    2147475222
    <
    2147483648
    =
    2^{31}
    =
    2^{M-2},
    \]
    and
    \[
    Gm
    =
    117465180365547648498934439
    =
    N_0.
    \]
    The result therefore follows from Lemma~\ref{lem:smallN}.
    
    \medskip

    \noindent
    \textbf{Case 2: \(N_0<N<N_1\).}
    
    In this range,
    \[
    z_0<z=N^{1/16.5}<z_1.
    \]
    We shall prove that
    \[
    S(\mathcal A,z)>0
    \]
    by finite verification. Rather than treating individual values of \(N\), we divide \([z_0,z_1]\) into finitely many intervals and obtain a lower bound that is uniform on each interval.
    
    By Lemma~\ref{lem:K-presieved}, the dimension condition holds in this range with \(\kappa=2\) and \(K=1.146\). Put
    \[
    k:=2+\log(1.146)
    \qquad\text{and}\qquad
    s:=14.8.
    \]
    Then \(s\geq2k+3\), and \(F(s,k)\) is positive. Let \(0<\delta<1\) and set \(D=z^s\). If
    \[
    z_0\leq u<v\leq z_1 \qquad\text{and}\qquad z\in(u,v],
    \]
    then
    \[
    X>\frac{u^{16.5}-1}{3}
    \]
    and
    \[
    V(z)\geq V(v)
    \geq
    \frac12\prod_{5\leq p<v}\left(1-\frac2p\right).
    \]
    Taking \(D=z^s\) in \eqref{eq:R4-presieved-rankin} and using
    \(z\leq v\), we obtain
    \[
    2R_4(\mathcal A,z^s)
    \leq
    \frac{19}{3}v^{s\delta} \prod_{\substack{p<v\\p \in \mathcal P}}\left(1+\frac8{p^\delta}\right).
    \]
    Therefore Theorem~\ref{thm:FI} gives
    \[
    S(\mathcal A,z)>S(u,v;s,\delta),
    \]
    where
    \[
    S(u,v;s,\delta)
    :=
    \frac{u^{16.5}-1}{6}
    \prod_{5\leq p<v}\left(1-\frac{2}{p}\right) \left\{1-\frac{s+3}{2e^k}\left(\frac{2ek}{s-3}\right)^{(s-3)/2}\right\}
    -
    \frac{19}{3} v^{s\delta} \prod_{\substack{p<v\\p \in\mathcal P}} \left(1+\frac{8}{p^\delta}\right).
    \]
    
    We now construct a finite sequence
    \[
    z_0=q_0<q_1<\cdots<q_j = z_1
    \]
    such that \(q_1, \dots q_{j-1}\) are prime. In our verification, the candidate endpoints are consecutive primes near \(z_0\), and their spacing is progressively increased, with every \(10{,}000\)th prime used once the candidate endpoint is at least \(10^7\). We fix \(s=14.8\) and write
    \[
    \Delta:=\{0.20,0.60,0.75,0.85,0.93\}.
    \]
    For each interval \((q_i,q_{i+1}]\), the finite verification establishes
    that
    \[
    \max_{\delta\in\Delta}
    S(q_i,q_{i+1};s,\delta)>0.
    \]
    The sequence \((q_i)\) and the verification of these inequalities are reproduced by the script \path{verify_case2_presieved.py} in the verification repository~\cite{VerificationRepo}. Therefore, for any \(z\in(z_0,z_1]\), choosing \(i\) such that
    \[
    z\in(q_i,q_{i+1}]
    \]
    gives
    \[
    S(\mathcal A,z)
    >
    \max_{\delta\in\Delta}
    S(q_i,q_{i+1};s,\delta)
    >
    0.
    \]
    
    Hence \(S(\mathcal A,z)>0\) for every \(z\in(z_0,z_1]\). Applying Lemma~\ref{lem:sieve-to-almost-prime} with \(r=16.5\), there exist positive integers \(a,b\) with \(N=a+b\) and \(\Omega(ab)<33\) when \(N\) is even, and \(\Omega(ab)<34\) when \(N\) is odd. As \(\Omega(ab)\) is an integer, \(\Omega(ab)\leq 33\) in both cases.
    
    \medskip
    
    \noindent
    \textbf{Case 3: \(N\geq N_1=10^{132}\).}
    
    We apply Theorem~\ref{thm:FI} to the pre-sieved sequence with
    \[
    z=N^{1/16.5},\qquad D=z^{14.66},\qquad
    s=14.66,\qquad k=2+\log(1.097).
    \]
    The condition \(s\geq2k+3\) is satisfied, and a direct computation gives
    \[
    1 - \frac{s+3}{2e^k} \left(\frac{2ek}{s-3}\right)^{(s-3)/2}
    >
    0.0563.
    \]
    Since
    \[
    X\geq\frac{N-1}{3},
    \]
    Lemmas~\ref{lem:K-presieved}, \ref{lem:V-presieved}, and \ref{lem:R-presieved} give
    \begin{align*}
    S(\mathcal A,z)
    &>
    \frac{1.241(N-1)}{3\log^2z}
    (0.0563)
    -
    2(0.563)z^{14.66}\log^8z \\
    &=
    \frac{1.241(16.5)^2(0.0563)}{3}
    \frac{N-1}{\log^2N}
    -
    \frac{1.126}{16.5^8}
    N^{14.66/16.5}\log^8N \\
    &>
    6.34\frac{N-1}{\log^2N}
    -
    2.05\times10^{-10}
    N^{14.66/16.5}\log^8N.
    \end{align*}
    The ratio of the first term in the final expression to the second is
    \[
    \frac{6.34}{2.05\times10^{-10}}
    \left(1-\frac1N\right)
    \frac{N^{1.84/16.5}}{\log^{10}N}.
    \]
    This ratio is increasing for \(N\geq10^{132}\), and at \(N=N_1\) a direct computation shows that it exceeds \(2.41\). Hence
    \[
    S(\mathcal A,z)>0.
    \]
    Applying Lemma~\ref{lem:sieve-to-almost-prime} with \(r=16.5\), there exist positive integers \(a,b\) with \(N=a+b\) such that
    \[
    \Omega(ab)<33
    \]
    when \(N\) is even, while
    \[
    \Omega(ab)<34
    \]
    when \(N\) is odd. Since \(\Omega(ab)\) is an integer, it follows that
    \[
    \Omega(ab)\leq33
    \]
    in both cases.
\end{proof}

\section{Further work}\label{sec:future-work}

The numerical results obtained in the proof suggest that the present implementation has not yet exhausted the method. In particular, a bound of \(\Omega(ab)\leq32\) appears to be within reach, while the stronger bound \(\Omega(ab)\leq31\) remains a credible possibility. Possible refinements include:
\begin{itemize}
    \item calculating the constant \(K\) in the dimension condition separately on each interval in the finite verification range;
    \item optimising both \(s\) and \(\delta\) on each interval, rather than fixing \(s\) throughout the computation and restricting \(\delta\) to a short predetermined list;
    \item pre-sieving by both \(3\) and \(5\), thereby also removing the prime \(5\), which is responsible for the largest values of \(K\) in the present calculation, at the expense of a more careful treatment of the resulting remainder terms;
    \item sharpening the constants in the accumulated error term and the accompanying product estimates.
\end{itemize}
These improvements are complementary. In particular, a smaller interval-dependent value of \(K\) permits a smaller choice of \(s\), which in turn reduces the exponent appearing in the accumulated error term and helps to keep the finite verification computationally manageable. With all other constants unchanged, if \(r\) were reduced from \(16.5\) to \(16\), the lower bound used in Case~3 of the proof of Theorem~\ref{thm:main} would become positive only once \(N\) is of order \(10^{192}\). The finite verification in Case~2 would therefore need to extend to \(z\) of order \(10^{12}\), making the computation substantially more demanding.

A further improvement may be possible through a stronger explicit lower-bound sieve factor than the Friedlander--Iwaniec expression \(F(s,k)\) used here. It is not clear, however, whether a useful explicit improvement is available within the same sieve framework, or how much such an improvement would contribute relative to the refinements listed above.

More substantial reductions in the number of prime factors would likely require a different sieve framework. One natural direction is the use of weighted sieves. For example, explicit forms of Kuhn's weighted sieve and Richert's weighted sieve have recently been used in the study of almost primes between consecutive squares~\cite{dudek_johnston2026_squares} and primes and almost primes between consecutive cubes~\cite{johnston_sorenson_thomas_webster2026}, respectively. Such weighted sieves may exploit information about the sizes of the remaining prime factors more efficiently than the unweighted lower-bound sieve used here. They may therefore offer a route to further reductions, potentially including bounds below \(30\), although adapting them explicitly to the present problem would require significant additional work.

\section*{Acknowledgements}

The author thanks Adrian W. Dudek for helpful discussions and comments on an earlier version of this manuscript. The author is supported by an Australian Government Research Training Program Scholarship.

\bibliographystyle{abbrv}
\bibliography{refs}

\end{document}